\theoremstyle{plain}
\newtheorem{theorem}{Theorem}[section]
\newtheorem{lem}[theorem]{Lemma}
\newtheorem{prop}[theorem]{Proposition}
\theoremstyle{definition}
\newtheorem{exa}[theorem]{Example}
\newtheorem{obs}[theorem]{Remark}
\numberwithin{equation}{section}
\newcommand{\mc}{C(\lambda, p)}
\newcommand{\MCsr}{C_*(\lambda,p)}
\newcommand{\MCI}{C_d(\lambda,p)}
\newcommand{\MCId}{C_2(\lambda,p)}
\newcommand{\MCIt}{C_3(\lambda,p)}
\begin{document}

\baselineskip=18pt

\title[Extinction time in growth models subject to binomial catastrophes]{Extinction 
	time in growth models subject to binomial catastrophes}

\author[Frank Duque]{F. Duque}
\address[Frank Duque]{Escuela de Matem\'aticas, Universidad Nacional de Colombia, Calle 59A no 63-20, Medellin, Colombia}
\email{frduquep@unal.edu.co}

\author[Valdivino V. Junior]{V. V. Junior}
\address[Valdivino V. Junior]{Institute of Mathematics and Statistics, Federal University of Goias, Campus Samambaia, 
CEP 74001-970, Goi\^ania, GO, Brazil}
\email{vvjunior@ufg.br}
\thanks{F\'abio Machado was supported by CNPq (303699/2018-3) and Fapesp (2017/10555-0). Alejandro Rold\'an was supported by Fapesp (2022/08948-2) and Universidad de Antioquia.}

\author[F\'abio P. Machado]{F. P. Machado}
\address[F\'abio P. Machado]{Statistics Department, Institute of Mathematics and Statistics, University of S\~ao Paulo, CEP 05508-090, S\~ao Paulo, SP, Brazil.}
\email{fmachado@ime.usp.br}

\author[Alejandro Rold\'an]{A. Rold\'an-Correa}
\address[Alejandro Rold\'an]{Instituto de Matem\'aticas, Universidad de Antioquia, Calle 67 no 53-108, Medellin, Colombia}
\email{alejandro.roldan@udea.edu.co}

%%%%

\keywords{Branching processes, catastrophes, population dynamics}
\subjclass[2010]{60J80, 60J85, 92D25}
\date{\today}

\begin{abstract}

Populations are often subject to catastrophes that cause mass removal of individuals. Many stochastic growth models have been considered to explain such dynamics. Among the results reported, it has been considered whether dispersion strategies, at times of catastrophes, increase the survival probability of the population. In this paper, we contrast dispersion strategies comparing mean extinction times of the population when extinction occurs almost surely. In particular, we consider populations subject to binomial catastrophes, that is, the population size is reduced according to a binomial law when a catastrophe occurs. Our findings delineate the optimal strategy (dispersion or non-dispersion) based on variations in model parameter values.
\end{abstract}

\maketitle

\section{Introduction}
\label{S: Introduction}
Several stochastic growth models have been considered to represent populations subject to catastrophes. When a catastrophe strikes, a random number of individuals are removed from the population. Survivors may remain together in the same colony (no dispersion) or disperse, making newly independent colonies. The interest in these models is to better understand quantities such as population survival probability, extinction time distribution, mean number of individuals removed, and the distribution of maximum population size, among others. 
The references \cite{AEL2007, B1986, BGR1982, CairnsPollett, KPR2016, KG2021, KG2022} pertain to population models where catastrophe survivors remain united in the same colony, while the models examined in \cite{JMR2016, JMR2020, MRV2018, MRS2015, S2014} investigate population dynamics with survivors dispersing to establish new colonies elsewhere.
In these papers, different types of catastrophes and different dispersion schemes are considered to analyze whether some of these schemes combined increase population viability. In biological context, it is known that dispersion holds a central role for both the dynamics and evolution of spatially structured populations. While it could save a small population from local extinction, it also could increase global extinction risk if observed in a very high level; refer to Ronce~\cite{R2007} for additional details.

The models analyzed in \cite{JMR2016,JMR2020, MRV2018,MRS2015, S2014} aim to establish, between dispersion and no dispersion, which is the best strategy, based on the survival probability of the population. When the survival probability is zero for both strategies, we need to go one step further considering the expected extinction time as this quantity is of particular importance to estimate the \lq\lq minimum viable population size\rq\rq\ to guarantee survival for a certain time, as considered in Brockwell~\cite{B1986}. 
For models of a single colony (no dispersion), one can find closed-form formulas for the mean extinction times for different types of catastrophes (see \cite{AEL2007, B1986, CairnsPollett}). For models with dispersion, analogous approach was considered for \textit{geometric catastrophes} (see~\cite{JMR2022}). 
Geometric catastrophes assume that the batch of removed individuals, when a catastrophe strikes, follows a geometric law; that is, the individuals are exposed to the catastrophic effect sequentially and the decline in the population stops at the first individual who
survives, or when the whole population in the colony becomes extinct.

Here we work with binomial catastrophes in models with dispersion. In binomial catastrophes the individuals of a colony are exposed to the catastrophic effect simultaneously and every individual survives a catastrophe with same probability, independently of anything else. We are able to present closed-form formulas for the mean extinction times and make comparisons with models without dispersion. Our analysis involves comparisons, by numerical and analytical methods, with functions expressed as infinite products, also known as \textit{infinite  $q$-products}. They are part of the theory of \textit{$q$-series} (see \cite{Berndt}). Further instances and applications of geometric catastrophes are detailed in \cite{AEL2007, EG2007, TH2020, KBG2020,KG2021}, while examples and applications of binomial catastrophes are found in \cite{AEL2007, B1986, BGR1982, KPR2016, KG2020, KG2022}.

In conclusion, here we propose to evaluate which strategy is better when extinction occurs almost surely, considering the mean extinction times for populations subject to \textit{binomial catastrophes}, that is, in the case when a population is hit by a catastrophe, its size is reduced according to a binomial distribution. In Section 2 we present the non dispersion model proposed in Artalejo~\textit{et al.}~\cite{AEL2007} and the models with dispersion proposed in Junior~\textit{et al.}~\cite{JMR2016}. Besides, we reach new results for these models. In Section 3 we discuss dispersal schemes as strategies for increasing life expectancy. In Section 4 we prove the results presented in Sections 2 and 3. Finally, in Section~\ref{appendix}, a numerical algorithm is developed allowing us to make calculations and comparisons with the infinite products that appear in the article.

This type of study provides predictive insights into population dynamics, aiding in conservation strategies and risk assessment. By quantifying vulnerability, informing policy decisions, and refining models, this research line contributes to both scientific understanding and practical applications. 

\section{Models and Results}

\subsection{Binomial catastrophe}

Populations are frequently exposed to catastrophic events that cause massive elimination of their individuals, for example, habitat destruction, environmental disaster, epidemics, etc. A catastrophe can instantly wipe out the entire population or just a part of it. In order to model such events, it is assumed that when a population is hit by a catastrophe, its size is reduced according to some law of probability. 
For catastrophes that reach the individuals simultaneously and independently of everything else, the appropriate model assume a binomial probability law. That is, if at a catastrophe time  the size of the population is $i$, it is reduced to $j$ with probability 
\[ \mu_{ij} =  {i\choose j}p^j(1-p)^{i-j}, \hspace{1cm}   0\leq j \leq i,\]
where $p\in(0,1)$ is the probability that each individual survives the catastrophe. The form of $\mu_{ij}$ represents what is called \textit{binomial catastrophe}.

%The geometric catastrophe would correspond to cases where the decline in the population is halted as soon as any individual survives the catastrophic event. This may be appropriate for some forms of catastrophic epidemics or when the  catastrophe has a sequential propagation effect like in the predator-prey models - the predator kills prey until it becomes satisfied. More examples can be found in Artalejo \textit{et al.}~\cite{AEL2007}, Cairns and Pollett~\cite{CairnsPollett}, Economou and Gomez-Corral~\cite{EG2007}, Thierry Huillet~\cite{TH2020} and Kumar \textit{et al.}~\cite{KBG2020}.

\subsection{Growth model without dispersion}
Artalejo \textit{et al.}~\cite{AEL2007} present a model for a population which sticks together in one colony, without dispersion.  That colony gives birth to new individuals at rate $\lambda>0$,  while binomial catastrophes happen at rate $\mu$.

The population size (number of individuals in the colony) at time $t$ is a continuous time Markov process $\left\{X(t):t\geq 0\right\}$ that we denote by $\mc$. With the intention of making the formulas more straightforward and simplify the analysis, we take $\mu=1$ and set $X(0)=1$. 

Artalejo \textit{et al.}~\cite{AEL2007} use the word \textit{extinction} to describe the event that $X(t) = 0$, for some $t>0$, for a process where state 0 is not an absorbing state. In fact the extinction time here is the first hitting time to the state 0,  
$$\tau_A:=\inf\{t>0:X(t)=0\}.$$ The probability of extinction of $\mc$ is denoted by $\psi_A=\mathbb{P}[\tau_A<\infty].$ Its complement, $1-\psi_A$, is called survival probability. Artalejo~\textit{et al.}~\cite{AEL2007} proved that $\psi_A=1$ (extinction occurs almost surely) for all $\lambda>0$ and $0<p<1.$ The next result establishes the mean time of extinction for $\mc$.

\begin{theorem}[Artalejo \textit{et al.}~\cite{AEL2007}]
\label{th:semdisptime}
For the process $\mc$,     
$$\mathbb{E}[\tau_A]=\frac{1}{\lambda}\left(\prod_{k=0}^{\infty}\left(1+\lambda p^k\right)-1 \right).
$$
\end{theorem}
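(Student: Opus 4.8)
The plan is to set up a first-step analysis for the continuous-time Markov chain $\mc$. Write $T_i := \mathbb{E}[\tau_A \mid X(0) = i]$ for the mean extinction time started from $i$ individuals; we want $T_1$. From state $i \geq 1$ the chain waits an exponential time of rate $\lambda + 1$ (birth rate $\lambda$, catastrophe rate $\mu = 1$), then with probability $\lambda/(\lambda+1)$ jumps to $i+1$, and with probability $1/(\lambda+1)$ a binomial catastrophe fires, sending it to $j$ with probability $\binom{i}{j}p^j(1-p)^{i-j}$. This yields, for each $i \geq 1$,
\[
T_i = \frac{1}{\lambda+1} + \frac{\lambda}{\lambda+1} T_{i+1} + \frac{1}{\lambda+1}\sum_{j=0}^{i} \binom{i}{j} p^j (1-p)^{i-j} T_j,
\]
with $T_0 = 0$. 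Multiplying through by $\lambda+1$ and isolating the $j=i$ term of the sum (whose coefficient is $p^i$) gives a recursion expressing $T_{i+1}$ in terms of $T_1, \dots, T_i$. Since $\psi_A = 1$ for all admissible parameters (quoted from Artalejo \textit{et al.}), all the $T_i$ are finite, so this manipulation is legitimate.

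Next I would pass to generating functions. Set $G(s) := \sum_{i=0}^{\infty} T_i \frac{s^i}{i!}$ (an exponential generating function is natural because of the binomial convolution). The binomial sum $\sum_j \binom{i}{j} p^j (1-p)^{i-j} T_j$ is exactly the $i$-th EGF coefficient of $e^{(1-p)s} G(ps)$, so summing the recursion against $s^i/i!$ turns it into a functional equation relating $G(s)$, $G(ps)$, and the derivative $G'(s)$ (the $T_{i+1}$ term shifts the index). One expects something of the shape
\[
(1+\lambda) G(s) = (e^s - 1) + \lambda G'(s)/(\text{something}) + e^{(1-p)s} G(ps),
\]
which after rearrangement should read as a first-order linear relation tying the value at $s$ to the value at $ps$. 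Iterating $s \mapsto ps \mapsto p^2 s \mapsto \cdots$ and using $G(0) = T_0 = 0$ (and continuity/analyticity near $0$) collapses the iteration into an infinite product; evaluating the resulting expression appropriately and extracting the coefficient of $s^1/1!$ — or more likely directly identifying $T_1$ from a scalar specialization — should produce $\tfrac{1}{\lambda}\bigl(\prod_{k=0}^{\infty}(1+\lambda p^k) - 1\bigr)$.

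The main obstacle I anticipate is bookkeeping in the generating-function step: the $T_{i+1}$ term requires care (an index shift, hence a derivative and possibly a division by $s$ or an integration), and one must justify convergence of $G$ on a neighborhood of the origin and the validity of interchanging sum and the infinite iteration — this is where the almost-sure extinction (finiteness of all $T_i$) plus a growth bound on $T_i$ is essential. An alternative, perhaps cleaner, route avoids EGFs entirely: guess that $T_i$ has the form $T_i = \frac{1}{\lambda}\bigl(\prod_{k=0}^{\infty}(1+\lambda p^k) \cdot c_i - 1\bigr)$ or more plausibly that the quantities $U_i := \lambda T_i + 1$ satisfy a clean multiplicative recursion, verify the closed form satisfies the first-step recursion above by a direct (if slightly tedious) binomial identity, and invoke uniqueness of bounded solutions to the recursion. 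I would try the direct verification first, since it sidesteps all analytic subtleties, and fall back on the generating-function derivation if guessing the pattern for general $i$ proves elusive.
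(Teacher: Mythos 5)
First, a point of comparison: the paper does not prove Theorem~\ref{th:semdisptime} at all --- it is quoted from Artalejo \emph{et al.}~\cite{AEL2007}, and Section~4 contains no argument for it. So your proposal has to stand on its own, and as written it is a plan rather than a proof: both of the routes you describe stop short of the decisive computation, and each has a genuine gap. (1)~Your justification for manipulating the recursion, ``since $\psi_A=1$, all the $T_i$ are finite,'' is not a valid inference: almost sure extinction does not imply finite mean extinction time (in this very paper, $C_2(\lambda,p)$ and $C_3(\lambda,p)$ at criticality die out a.s.\ with infinite mean extinction time). Finiteness of the $T_i$ must be proved, not assumed. (2)~The first-step system determines $T_{i+1}$ from $T_1,\dots,T_i$ but leaves $T_1$ free, so it admits a one-parameter family of solutions and some selection principle is needed (minimal nonnegative solution, or uniqueness within a growth class). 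Your fallback ``uniqueness of bounded solutions'' cannot be invoked, because the $T_i$ are unbounded: starting from $i$ individuals, of order $\log i/\log(1/p)$ catastrophes are needed just to remove the initial cohort, so $T_i\to\infty$. (3)~The EGF identity you anticipate is a differential--$q$-difference equation, coupling $G'(s)$, $G(s)$ and $G(ps)$ simultaneously; it does not collapse under iteration of $s\mapsto ps$ into an infinite product the way a pure $q$-difference relation would, and integrating out the $G'$ term produces nested integrals rather than the clean product. (4)~No closed form for general $T_i$ is actually proposed, so the ``direct verification'' route is not carried out either.

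A route that does close, in case it helps: the population can only reach $0$ at the catastrophe epochs $t_1<t_2<\cdots$, and just after the $n$-th catastrophe it equals a $\mathrm{Bin}(1,p^n)$ remnant of the initial individual plus a conditionally Poisson number of surviving later births, whence
$\mathbb{P}\bigl(X(t_n^+)=0\bigr)=(1-p^n)\prod_{k=1}^{n}(1+\lambda p^k)^{-1}$.
Combining this with the renewal equation at catastrophe epochs (relating these return probabilities to the first-passage distribution of $\tau_A$), with the identity $\mathbb{E}[\tau_A]=\sum_{n\geq 0}\mathbb{P}(\tau_A>t_n)$, and with the telescoping sum
$\sum_{n\geq 1}\lambda p^{n}\prod_{k=1}^{n}(1+\lambda p^k)^{-1}=1-\prod_{k=1}^{\infty}(1+\lambda p^k)^{-1}$,
yields exactly $\mathbb{E}[\tau_A]=\lambda^{-1}\bigl(\prod_{k=0}^{\infty}(1+\lambda p^k)-1\bigr)$, and gives finiteness as a byproduct rather than as a hypothesis.
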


\begin{obs}
The infinite product $\prod_{k=0}^{\infty} (1+ \lambda p^k)$ is convergent for all $|p|<1$ and $\lambda \in \mathbb{R}$. For series representations and other properties of this infinite product, see  \cite[Corollary 2.3]{Berndt} and  \cite[Theorem 10.10]{Charalambides}.
\end{obs}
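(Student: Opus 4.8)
The plan is to reduce convergence of the infinite product to absolute convergence of an associated series via the standard logarithmic criterion for infinite products. I would recall that $\prod_{k=0}^{\infty}(1+a_k)$ converges to a finite limit precisely when the series $\sum_{k} \log(1+a_k)$ converges, once all but finitely many of the factors are positive; and that the latter series converges absolutely whenever $\sum_k |a_k| < \infty$. Here $a_k = \lambda p^k$, and since $|p|<1$ we have $a_k \to 0$, so only finitely many factors can possibly fail to be positive, which is what makes the criterion applicable.

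First I would isolate the finitely many initial factors. Because $\lambda p^k \to 0$ as $k \to \infty$, I would choose $N = N(\lambda, p)$ so that $|\lambda p^k| \le \tfrac{1}{2}$ for every $k \ge N$; for such $k$ the factor satisfies $1 + \lambda p^k \in [\tfrac{1}{2}, \tfrac{3}{2}]$ and is in particular positive, so its logarithm is defined. The finite partial product $\prod_{k=0}^{N-1}(1+\lambda p^k)$ is then a well-defined real number that does not affect convergence. (If some early factor vanishes, i.e.\ $\lambda = -p^{-k_0}$ for some $k_0 < N$, the product is identically $0$; otherwise this finite prefactor is nonzero.)

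Next I would control the tail. Using the elementary bound $|\log(1+x)| \le 2|x|$ valid for $|x| \le \tfrac{1}{2}$, I would estimate
\[
\sum_{k=N}^{\infty} \bigl|\log(1+\lambda p^k)\bigr| \;\le\; 2|\lambda| \sum_{k=N}^{\infty} |p|^k \;=\; \frac{2\,|\lambda|\,|p|^{N}}{1-|p|} \;<\; \infty,
\]
where the geometric series converges exactly because $|p|<1$. Hence $\sum_{k=N}^{\infty} \log(1+\lambda p^k)$ converges absolutely, and exponentiating shows the tail product $\prod_{k=N}^{\infty}(1+\lambda p^k)$ converges to a finite nonzero limit. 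Multiplying by the finite prefactor then yields convergence of $\prod_{k=0}^{\infty}(1+\lambda p^k)$ for every $\lambda \in \mathbb{R}$ and $|p|<1$.

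I do not expect a serious obstacle, as this is the textbook comparison of an infinite product with a convergent geometric series. The only point that genuinely requires care is the possible nonpositivity of the early factors when $\lambda$ is a large negative number, which is precisely why I split off a finite prefactor before taking logarithms; the analytic work (the estimate $\sum_k |a_k| < \infty$) then takes place entirely in the tail, where the factors are guaranteed positive.
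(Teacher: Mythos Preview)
Your argument is correct: splitting off finitely many factors so that the tail has $|\lambda p^k|\le \tfrac12$, bounding $|\log(1+x)|\le 2|x|$ there, and comparing with a geometric series is the standard and complete justification. Note that the paper does not actually prove this statement; it is recorded as a remark with pointers to \cite{Berndt} and \cite{Charalambides}, so there is no in-paper proof to compare your approach against.
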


\subsection{Growth models with dispersion and spatial restriction.}
Let $\mathbb{T}_d^+$ be an infinite rooted tree whose vertices
have degree $d+1$, except the root that has degree $d$. Let us define a process with dispersion on $\mathbb{T}_d^+$, starting from a single colony placed at the root of $\mathbb{T}_d^+$, with just one individual. The number of individuals in a colony grows following a Poisson process of rate $\lambda>0$. To each colony we associate an exponential time of mean 1 that indicates when the binomial catastrophe strikes a colony. 
Each one of the individuals that survived the catastrophe picks
randomly a neighbor vertex between the $d$ neighboring vertices furthest from the root to create new colonies. Among the survivors that go to the same vertex to create a new colony at it, only one succeeds, the others die. So in this case  when a catastrophe occurs in a colony, that colony is replaced by 0,1, ... or $ d $ colonies. Let us denote this process with by $\MCI$.
	
$\MCI$ is a continuous-time Markov process with state space $\{0,1,2,3, \dots\}^{\mathbb{T}^d}$. For each particular realization of this process, we say that it {\it survives} if for any instant of time there is at least one colony somewhere. Otherwise, we say that it {\it dies out }. We denoted by $\psi_d$, the probability of extinction of $\MCI$. Junior~\textit{et al.}~\cite[Theorem 2.8]{JMR2016} showed that $\psi_d<1$ if and only if $p>\frac{d}{d+(d-1)\lambda}$, showing that there is a phase transition with respect to the parameter $p$.
	
It is clear that when  $\psi_d<1$, the extinction mean time for the process $\MCI$ is infinite. In the next results,  we derive the extinction mean time when extinction occurs almost surely, when $d=2$ and $d=3$.

 \begin{theorem}\label{MCItime}
	Let $\tau_d$ the extinction time of the process $\MCI$. 
	\begin{itemize}
		\item[$(i)$] If $ p < \frac{2}{\lambda+2}$, then 
	$$\mathbb{E}[\tau_2]=\frac{(\lambda p+1)(\lambda p +2)}{\lambda p^2(\lambda +1)}\ln\left[\frac{(1-p)(\lambda p+2)}{(1-p)(\lambda p +2)-\lambda p^2(\lambda +1)}\right].$$
	If $ p = \frac{2}{\lambda+2}$, then 
	$\mathbb{E}[\tau_2]=\infty.$
	\item[$(ii)$] If   $p < \frac{3}{2\lambda+3}$, then
	$$\mathbb{E}[\tau_3]=
	\frac{2\lambda p+3}{2 g(\lambda,p)}\ln\left[\frac{3-3p-\lambda p + g(\lambda,p)}{3-3p-\lambda p - g(\lambda,p)}\right],$$
	where 
	\begin{equation}\label{functiong}
	g(\lambda,p)= \sqrt{\frac{\lambda^2p^3(\lambda+1)(6+\lambda p-3p)}{(\lambda p +3)(\lambda p+1)}}
	\end{equation}
	
	If   $p = \frac{3}{2\lambda+3} $, then $\mathbb{E}[\tau_3]=\infty.$
	
   \end{itemize}
\end{theorem}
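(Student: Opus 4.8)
The plan is to pass from $\MCI$ to the branching process of its colonies, convert the extinction-time problem into a finite system of ordinary differential equations, solve that system, and integrate the survival function.

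\emph{Step 1 (a branching equation).} The subtrees of $\mathbb{T}_d^+$ hanging from distinct vertices are vertex-disjoint, so the colonies of $\MCI$ form a branching process: from a single colony, after an $\mathrm{Exp}(1)$ lifetime $T$ (the time of its catastrophe) the colony is replaced by $K\in\{0,1,\dots,d\}$ colonies, each born with one individual, and conditionally on $K$ these sub-populations are independent copies of the whole process. The joint law of $(T,K)$ is explicit: a colony alive for time $s$ holds $1+\mathrm{Poisson}(\lambda s)$ individuals, each surviving the catastrophe independently with probability $p$ and then landing on one of the $d$ child vertices uniformly, so given $T=s$ the number $K$ of new colonies is distributed as the number of nonempty boxes when $\mathrm{Binomial}\!\bigl(1+\mathrm{Poisson}(\lambda s),p\bigr)$ balls fall uniformly into $d$ boxes; summing the Poisson series, $\mathbb{P}[K=j\mid T=s]$ is for each $j$ a fixed linear combination of $1,e^{-\lambda ps/d},e^{-2\lambda ps/d},\dots,e^{-\lambda ps}$, and in particular $\mathbb{E}[K]=\tfrac{dp(\lambda+1)}{d+\lambda p}$, which equals $1$ exactly when $p=\tfrac{d}{d+(d-1)\lambda}$. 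With $F(t)=\mathbb{P}[\tau_d\le t]$, the branching property yields the nonlinear Volterra equation
\[
F(t)=\int_0^t e^{-s}\,\mathbb{E}\!\bigl[F(t-s)^K\mid T=s\bigr]\,ds ,
\]
whose kernel is a finite sum of exponentials.

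\emph{Step 2 (an ODE system).} Keep, for each colony, only its current number of ``already-claimed'' children: a nondecreasing $\{0,\dots,d\}$-valued Markov chain run alongside the independent $\mathrm{Exp}(1)$ clock. A colony is born in state $1$ with probability $p$ (its founding individual survives) and in state $0$ otherwise, whence $F=(1-p)R_0+pR_1$, where $R_j(t)$ is the extinction probability by time $t$ for a colony presently in state $j$. The backward equations for $R_0,\dots,R_d$ form a closed system of $d+1$ first-order ODEs with zero initial data, whose only nonlinearity is the term $F^K$ (quadratic for $d=2$, cubic for $d=3$); the same system arises from the Volterra equation of Step~1 by differentiating out its exponential kernel.

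\emph{Step 3 (solving and integrating).} Eliminating the auxiliary functions collapses the system to a single scalar ODE for the survival function $\psi=1-F$; solving this ODE in closed form is the crux of the proof. Its constant-coefficient part factors (for $d=2$ one checks that $D+(1+\lambda p)$ divides the associated characteristic polynomial), and what is left can be integrated explicitly, giving $\psi(t)$ as a ratio of finite sums of exponentials whose exponents are the roots of that polynomial; for $d=3$ these roots carry the radical $g(\lambda,p)$ of \eqref{functiong}. Then $\mathbb{E}[\tau_d]=\int_0^\infty\psi(t)\,dt$ is an elementary partial-fraction integral producing a single logarithm, which simplifies to the displayed formulas. (For $d=2$ the two relevant exponents are irrational, but they enter the integral only through symmetric functions, so no radical remains; for $d=3$ a genuine radical $g$ survives.)

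\emph{Step 4 (the critical case).} When $p=\tfrac{d}{d+(d-1)\lambda}$ one has $\mathbb{E}[K]=1$, so the embedded Galton--Watson tree of colonies is critical; then $\psi(t)\sim c/t$ as $t\to\infty$, so $\int_0^\infty\psi=\infty$, i.e. $\mathbb{E}[\tau_d]=\infty$. For $p$ strictly below this value the tree is subcritical, $\psi$ decays exponentially, the integral converges, and the manipulations of Step~3 are justified. The main obstacle throughout is Step~3 — extracting the closed-form solution of the reduced nonlinear ODE.
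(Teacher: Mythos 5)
Your Steps 1, 2 and 4 are sound set-up (your computation of $\mathbb{E}[K]=\frac{dp(\lambda+1)}{d+\lambda p}$ and of the critical curve is correct), but the proposal has a genuine gap exactly where the proof lives: Step 3 is never carried out. You yourself call the closed-form solution of the reduced nonlinear system ``the crux'' and ``the main obstacle'', and you only describe what the answer ought to look like (``a ratio of finite sums of exponentials whose exponents are the roots of that polynomial''). That is the shape of a Riccati solution; for the coupled system $R_j'=\lambda p\,\tfrac{d-j}{d}(R_{j+1}-R_j)+F^j-R_j$ with $F=(1-p)R_0+pR_1$ you give no derivation, no argument that it collapses to a single integrable scalar equation, and no verification that the final integral reproduces the displayed formulas, in particular the radical $g(\lambda,p)$.

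Moreover, your route diverges from the paper's in a way that matters. The paper never solves a time-dependent equation for $F$: it integrates out the colony's lifetime first, using the marginal law $\mathbb{P}(N=0)=\beta$, $\mathbb{P}(N=n)=\alpha c^n$ of the number of catastrophe survivors given in (\ref{beta_lambda_c}), together with a surjection count for the dispersion step, to obtain the time-marginal offspring distribution $p_0,\dots,p_d$ of the number of daughter colonies. This exhibits the colony count as a Markov branching process with offspring bounded by $d$, and Lemma~\ref{lemaux} --- which for such processes is just the elementary integral $\mathbb{E}[\tau]=\int_0^1\frac{1-u}{f(u)-u}\,du$, a partial-fraction computation when the offspring is supported on $\{0,1,2\}$ or $\{0,1,2,3\}$ --- gives the formulas at once. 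Your Volterra equation in Step 1 deliberately retains the dependence between a colony's lifetime $T$ and its offspring number $K$ (long-lived colonies tend to produce more daughter colonies), which yields a non-autonomous equation whose solution is in general a different function of $t$ from the solution of $F'=f(F)-F$. Since $\mathbb{E}[\tau_d]=1+\int_0^\infty\bigl(1-f(F(t))\bigr)\,dt$ depends on the whole function $F$ and $f$ is nonlinear for $d\ge2$, you cannot simply assume the two computations agree: you must either justify the reduction to the marginal offspring law (the step the paper takes via its branching-process description of $\MCI$) or actually integrate your system and check that it produces the stated expressions. Neither is done, so the proposal does not establish the theorem as written.
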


\subsection{Growth model with dispersion but no spatial restrictions.} 
	Consider a population of individuals divided into separate colonies. Each colony begins with  
	an individual. The number of individuals in each colony increases independently according 
	to a Poisson process of rate $\lambda > 0 $.
	To each colony we associate an exponential time of mean 1 that indicates when the binomial catastrophe strikes a colony. Each individual that survived the catastrophe 
	begins a new colony independently of everything else.
	We denote this process by $C_*(\lambda,p)$ and consider it starting from a single colony with just one
	individual.\\

	For each particular realization of $C_*(\lambda,p)$, we say that it {\it survives} if for any instant of time there is at least one colony somewhere. Otherwise, we say that it {\it dies out.} 
	We denoted by $\psi_*$, the probability of extinction of $C_*(\lambda,p).$ Junior {\textit et al.} \cite[Theorem 2.3]{JMR2016} showed that $\psi_*<1$ if and only if $p>\displaystyle\frac{1}{\lambda+1}$. 
	\\
	
	It is clear that when $\psi_*<1$, the extinction mean time of $C_*(\lambda,p)$ is infinite.
	The following theorem establishes the mean time of extinction for $C_*(\lambda,p)$ when $\psi_*=1.$  
	\begin{theorem}\label{th:comdisptime}
		Let $\tau_*$ the extinction time of the process $C_*(\lambda,p)$. Then      
		$$\mathbb{E}[\tau_*]=\left\{
		\begin{array}{ccl}
		1-\displaystyle\frac{\lambda+1}{\lambda}\ln\left[1-\frac{\lambda p}{1-p}\right]&,& \text{if }   p<\displaystyle\frac{1}{\lambda+1};\\ \\
		\infty&, & \text{if }  p=\displaystyle\frac{1}{\lambda+1}.
		\end{array}\right.
		$$
\end{theorem}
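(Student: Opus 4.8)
The plan is to pass from individuals to colonies, recognize the colony count as a branching process, write down the Kolmogorov equation for the extinction‑time distribution function, and then integrate it by a change of variables.

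\emph{Reduction to a colony‑counting branching process.} A colony is founded with one individual, accretes individuals along a rate‑$\lambda$ Poisson process, and after an independent $\mathrm{Exp}(1)$ lifetime $L$ undergoes its catastrophe; at that instant it is replaced by $\mathrm{Bin}(S,p)$ fresh one‑individual colonies, where $S=1+\mathrm{Poisson}(\lambda L)$ is its size at the catastrophe. Integrating out $L$, the size $S$ is $1$ plus a geometric variable of parameter $1/(1+\lambda)$, so (binomially thinning each individual with probability $p$) the number of colonies produced at a catastrophe has probability generating function $f(z)=\dfrac{1-p+pz}{1+\lambda p(1-z)}$, with $f'(1)=p(1+\lambda)$ — consistent with the phase transition $\psi_*<1\iff p>1/(1+\lambda)$ of \cite[Theorem 2.3]{JMR2016}. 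Using the memorylessness of the catastrophe clock (and the fact that, with no spatial constraint, every survivor founds a colony), I would argue that for the purpose of the extinction time $\MCsr$ behaves as a continuous‑time Markov branching process with branching rate $1$ and offspring law $f$.

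\emph{Integrating the Kolmogorov equation.} Let $G(t)=\mathbb{P}[\tau_*\le t]$. Conditioning on the first catastrophe gives the backward equation $G'(t)=f(G(t))-G(t)$, $G(0)=0$, and since $\psi_*=1$ for $p\le 1/(1+\lambda)$ we have $G(t)\uparrow 1$. When $p<1/(1+\lambda)$ a short computation gives
\[
f(q)-q=\frac{(1-q)\bigl(1-p-\lambda p q\bigr)}{1+\lambda p(1-q)}>0\quad\text{on }(0,1),
\]
so $G$ is a strictly increasing bijection of $[0,\infty)$ onto $[0,1)$, $dt=dq/(f(q)-q)$, and hence
\[
\mathbb{E}[\tau_*]=\int_0^\infty\bigl(1-G(t)\bigr)\,dt=\int_0^1\frac{1-q}{f(q)-q}\,dq=\int_0^1\left(1+\frac{p(1+\lambda)}{1-p-\lambda p q}\right)dq .
\]
Evaluating this elementary integral yields $\mathbb{E}[\tau_*]=1-\dfrac{\lambda+1}{\lambda}\ln\dfrac{1-p-\lambda p}{1-p}=1-\dfrac{\lambda+1}{\lambda}\ln\!\bigl(1-\dfrac{\lambda p}{1-p}\bigr)$, as claimed. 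For $p=1/(1+\lambda)$ one has $1-p-\lambda p=0$, the integrand behaves like $(1-q)^{-1}$ as $q\uparrow1$, the integral diverges, and $\mathbb{E}[\tau_*]=\infty$.

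\emph{The main obstacle.} The delicate point is the reduction: the colony count is \emph{not} itself a Markov chain, because a colony's offspring generating function at its catastrophe is $(1-p+pz)^S/(1+\lambda p(1-z))$, which depends on its accumulated size $S$, not merely on how many colonies are currently alive. To make the first paragraph rigorous one either works with the genuinely Markov description whose types are colony sizes — solving the system $q_i'=-(\lambda+1)q_i+\lambda q_{i+1}+(1-p+pq_1)^i$ for the size‑$i$ extinction probabilities, forming the generating function $\sum_{i\ge1}\bigl(\int_0^\infty(q_i(t)-q_{i-1}(t))\,dt\bigr)x^i$ (with $q_0\equiv1$), and using that it must be analytic at $x=\lambda/(\lambda+1)$ to extract $\mathbb{E}[\tau_*]$ — or exhibits a coupling with the Markov branching process above. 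Either route reduces the mean extinction time to the scalar equation $G'=f(G)-G$, after which the change of variables and integral displayed above are routine; the remaining calculations ($f(q)-q$, the partial fraction, the logarithm) are straightforward algebra.
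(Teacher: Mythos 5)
Your computational core coincides with the paper's proof. The paper treats the colony count as a continuous-time Markov branching process with branching rate $1$ and offspring law $p_0=\beta=\frac{1-p}{\lambda p+1}$, $p_n=\alpha c^n$ with $\alpha=\frac{\lambda+1}{\lambda(\lambda p+1)}$, $c=\frac{\lambda p}{\lambda p+1}$, and then invokes Lemma~\ref{lemaux}$(iii)$. Your generating function $f(z)=\frac{1-p+pz}{1+\lambda p(1-z)}$ is exactly the PGF of that law, and your integration of $G'=f(G)-G$ via $\mathbb{E}[\tau_*]=\int_0^1\frac{1-q}{f(q)-q}\,dq$ is a correct re-derivation of case $(iii)$ of that lemma, including the divergence at $p=\frac{1}{\lambda+1}$. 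Up to replacing the citation by the ODE computation, this is the same route, and the algebra checks out.

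The problem is the final paragraph. You are right that the colony count is not Markov: a colony's offspring number is $\mathrm{Bin}(S,p)$ with $S$ determined by its own exponential lifetime, so lifetime and offspring number are \emph{dependent}, and the colony process is an age-dependent (Sevastyanov-type) branching process rather than the Markov one assumed in Lemma~\ref{lemaux}. (The paper does not engage with this point at all; it simply asserts the Markov branching structure.) But your claim that either rigorous route ``reduces the mean extinction time to the scalar equation $G'=f(G)-G$'' is unproved and cannot be literally true: conditioning on the first catastrophe gives the renewal equation $G(t)=\int_0^t e^{-u}\bigl(1-p+pG(t-u)\bigr)e^{-\lambda u p(1-G(t-u))}\,du$, whose solution satisfies $G'(0^+)=1-p$, whereas any solution of $G'=f(G)-G$ with $G(0)=0$ has $G'(0)=f(0)=\frac{1-p}{1+\lambda p}$. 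So the true distribution function of $\tau_*$ does \emph{not} solve the scalar ODE; only the offspring \emph{marginal} (which is all that matters for the survival probability in \cite{JMR2016}) is captured by $f$, while the extinction time depends on the joint law of lifetime and offspring count. Whether $\mathbb{E}[\tau_*]$ for the true process nevertheless agrees with $\int_0^1\frac{1-q}{f(q)-q}\,dq$ is precisely what needs an argument, and neither your sketch (the system $q_i'=-(\lambda+1)q_i+\lambda q_{i+1}+(1-p+pq_1)^i$ is written down but not solved, and no coupling is exhibited) nor the paper supplies one. As a proof going beyond the paper's level of rigor, this step is a genuine gap; as it stands, your argument rests on the same unjustified identification as the paper's.
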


\subsection{Connections with branching processes}
The models $\MCI$ and $C_*(\lambda,p)$ are special versions of branching processes.  Next, we present an alternative description of these models.\\

Let $N_.$ be Poisson process with rate $\lambda$, and $N_0 = 1$. Let $J$ be an exponential random variable with rate 1, independent of $N_.$. Consider a population of size $N_J$ that undergoes a catastrophe: each element of it survives with probability $p$, independently of the rest. After the catastrophe, the surviving population size is then $Z = \text{Bin}(N_J, p)$ (a binomial random variable). If this number is zero, set $B = 0.$ Otherwise, label each element of the surviving population with a type $1, \ldots, d,$ independently of the others, and let $B$ be the number of distinct resulting types.

Next, consider a continuous time branching process with rate 1 and offspring distribution $B$. The resulting model is $\MCI$. The limit corresponding to $d\to\infty$ (number of types is the same as the size of the surviving population) is  $C_*(\lambda,p)$ and we refer to it informally as the $d = \infty$ case. In particular, for $C_*(\lambda,p)$, the offspring distribution conditioned on $Z$ is $\delta_Z$. As for $\MCI$, $d <\infty$, the offspring distribution conditioned on $Z = n$ has equal to $k = 0, 1,\ldots, d$ with probability $p_{n,k}$. In light of the above construction, $p_{0,\cdot} = \delta_0$, and for $n \geq 1$, and $k = 1, \ldots, d\wedge n$, a combinatorial calculation gives
\[p_{n,k}=\frac{1}{d^n}{d\choose k} \sum_{r_1,\ldots,r_k\geq1, r_1+\cdots+r_k=n}\frac{n!}{r_1!\cdots r_k!}\]
(for all other values $p_{n,k} = 0$). The formula above represents the proportion of ways to label $n$ items resulting in exactly $k$ distinct labels from the set $\{1,\ldots, d\}$. The expression for $p_{n,k}$ is manageable when $d = 2, 3$ and gets more complicated as $d$ gets larger. Note however that as $d \to\infty $, $p_{n,\cdot} \to \delta_n $, which is exactly observed for $d = \infty.$

\section{Discussion}\label{Section:Discussion}

In the presence of binomial catastrophes, dispersion is a good strategy to increase the probability of survival of the population. When there is no dispersion the probability of survival is always zero, see Artalejo~\textit{et al.}~\cite[Theorem 3.1]{AEL2007}). However, when there is dispersion the probability of survival can be positive depending on the parameters $\lambda$ and $p$, see Junior~\textit{et al.}~\cite[Theorems 2.3 and 2.8]{JMR2016} for details. An interesting question is to determine whether,
when the processes $\mc$, $\MCI$ and $\MCsr$  dies out almost surely, dispersion is an advantage or not for extend the population's life span. The answer is not trivial. Note that the growth and catastrophe rates are $n\lambda$ and $n$, respectively, whenever there are $n$ colonies in the whole population. Moreover, a catastrophe is more likely to wipe out a smaller colony than a larger one. On the other hand, multiple colonies provide multiple chances for survival (because the catastrophe only affects the colony where it occurs) and this may be a critical advantage of the processes $\MCI$ and $\MCsr$ over the process  $\mc$. Also note that in the $\MCI$ process, due to space constraints, during dispersion, some individuals may end up at the same spatial location. In this case, all but one individuals die. As a consequence, there is a dispute: On one hand, dispersion creates independent populations and thus contributes to survival. On the other hand, dispersion leads to death due to competition for space. 

Next result provides a comparison of the average times until extinction between processes $\mc$ and $\MCId$, under the condition that extinction happens almost surely in both processes.

\begin{prop}\label{indepte2} Assume  $p < \frac{2}{2+\lambda}.$ Then $\mathbb{E}[\tau_A]<\mathbb{E}[\tau_2]$ if and only if	
	\begin{equation}\label{eq:indepte2}
	\prod_{k=0}^{\infty}\left(1+\lambda p^k\right) <1+\frac{(\lambda p+1)(\lambda p +2)}{ p^2(\lambda +1)}\ln\left[\frac{(1-p)(\lambda p+2)}{(1-p)(\lambda p +2)-\lambda p^2(\lambda +1)}\right].
	\end{equation}
Moreover, $\mathbb{E}[\tau_A]= \mathbb{E}[\tau_2]$ if and only if we have an equality in (\ref{eq:indepte2}).
\end{prop}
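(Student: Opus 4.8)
The plan is purely algebraic: I would substitute the two closed-form expressions obtained earlier and rearrange. Under the standing assumption $p<\frac{2}{2+\lambda}$, Theorem~\ref{MCItime}$(i)$ supplies the explicit value of $\mathbb{E}[\tau_2]$, and the first thing to record is that this value is a finite positive real number. For the logarithm in that formula to make sense one needs $(1-p)(\lambda p+2)-\lambda p^2(\lambda+1)>0$; I would check this by expanding the expression into the quadratic $2+(\lambda-2)p-\lambda(\lambda+2)p^2$ and factoring it as $-\lambda(\lambda+2)\left(p+\frac1\lambda\right)\left(p-\frac{2}{\lambda+2}\right)$, which is strictly positive precisely for $p\in\left(0,\frac{2}{\lambda+2}\right)$. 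Hence the argument of the logarithm is a real number strictly larger than $1$, so $\mathbb{E}[\tau_2]\in(0,\infty)$. On the other side, Theorem~\ref{th:semdisptime} gives $\mathbb{E}[\tau_A]=\frac1\lambda\left(\prod_{k=0}^{\infty}(1+\lambda p^k)-1\right)$, which is finite because the infinite product converges for $|p|<1$, as recorded in the Remark following that theorem.

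With both sides finite, the comparison $\mathbb{E}[\tau_A]<\mathbb{E}[\tau_2]$ becomes, after inserting the two formulas,
$$\frac1\lambda\left(\prod_{k=0}^{\infty}(1+\lambda p^k)-1\right)<\frac{(\lambda p+1)(\lambda p+2)}{\lambda p^2(\lambda+1)}\ln\left[\frac{(1-p)(\lambda p+2)}{(1-p)(\lambda p+2)-\lambda p^2(\lambda+1)}\right].$$
Since $\lambda>0$, multiplying through by $\lambda$ and then adding $1$ are equivalence-preserving steps, and carrying them out turns this inequality into exactly (\ref{eq:indepte2}); that establishes the first claim. Repeating the same two steps with $<$ replaced by $=$ shows that $\mathbb{E}[\tau_A]=\mathbb{E}[\tau_2]$ holds if and only if equality holds in (\ref{eq:indepte2}), which is the second claim.

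I do not anticipate any real obstacle: the whole content of the proposition is carried by the two previously established closed-form formulas, so the argument reduces to a one-line manipulation once the finiteness of both sides is in place. The only point that deserves a moment's care is the verification that the logarithm appearing in Theorem~\ref{MCItime}$(i)$ is well defined and positive under the hypothesis $p<\frac{2}{2+\lambda}$ — that is, the quadratic factorization indicated above, which incidentally also explains why $p=\frac{2}{2+\lambda}$ forces $\mathbb{E}[\tau_2]=\infty$.
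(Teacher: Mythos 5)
Your proposal is correct and follows the same route as the paper, which simply notes that the proposition is an immediate consequence of Theorems~\ref{th:semdisptime} and \ref{MCItime}$(i)$ after multiplying by $\lambda$ and adding $1$. Your extra verification that $(1-p)(\lambda p+2)-\lambda p^2(\lambda+1)=-\lambda(\lambda+2)\left(p+\frac{1}{\lambda}\right)\left(p-\frac{2}{\lambda+2}\right)>0$ for $p<\frac{2}{\lambda+2}$ is a correct and welcome sanity check on the finiteness of $\mathbb{E}[\tau_2]$, though the paper leaves it implicit in the statement of Theorem~\ref{MCItime}.
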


Proposition~\ref{indepte2} is a consequence of Theorems~\ref{th:semdisptime} and \ref{MCItime}$(i)$. 
In Section~\ref{appendix} we develop a numerical algorithm that allows the computation and comparison of the function $f(p, \lambda)=\prod_{k=0}^{\infty} (1+ \lambda p^k).$ 
In particular, we can verify whether and where, in terms of the parametric space, inequality (\ref{eq:indepte2}) holds. From Proposition~\ref{indepte2} we can conclude that dispersion is a better strategy compared to
non-dispersion, when the parameters $(\lambda, p)$ fall in the gray region of Figure~\ref{fig:indepte2}. The opposite (non-dispersion is a better strategy than  dispersion) holds in the yellow region. 
Furthermore, Junior~\textit{et al}~\cite[Theorem 2.8]{JMR2016} show that the extinction probabilities in the white region of Figure~\ref{fig:indepte2} satisfies $\psi_2<1=\psi_A.$ In conclusion, still in the white region,  dispersion  is a better strategy than non-dispersion.

\begin{figure}[ht]
	\begin{tabular}{ccc}
		$\lambda$ & \parbox[c]{9cm}{\includegraphics[trim={0cm 0cm 0cm 0cm}, clip, width=9cm]{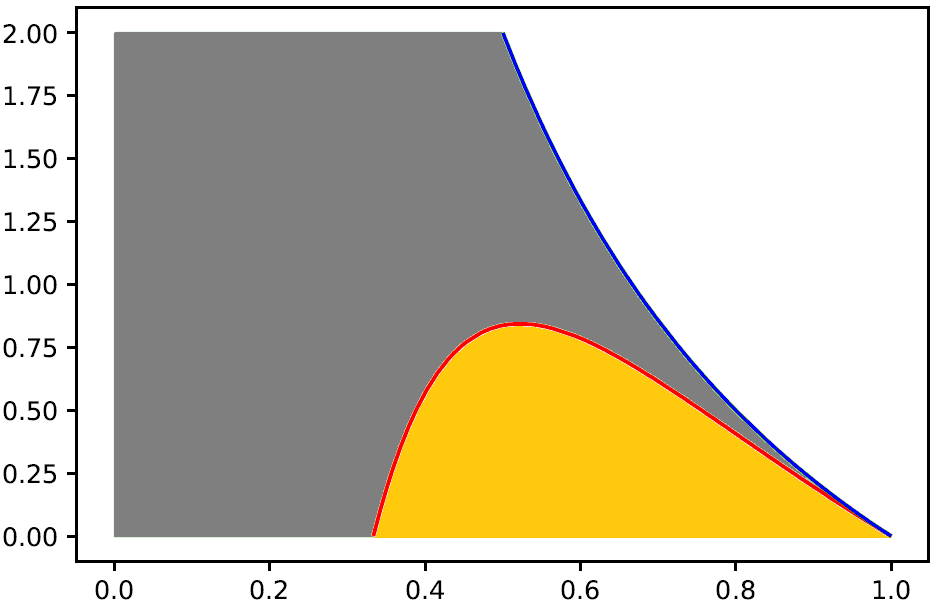}} & 
		\begin{tabular}{l}
			\textbf{\textcolor{red}{------}} Equality in (\ref{eq:indepte2})\\ \\
			\textbf{\textcolor{blue}{------}}  $p=\frac{2}{\lambda +2}$
		\end{tabular} \\
		& $p$
	\end{tabular}
	\caption{In the gray region, $\mathbb{E}[\tau_A]<\mathbb{E}[\tau_2]$. In the yellow region, $\mathbb{E}[\tau_A]>\mathbb{E}[\tau_2]$.}
	\label{fig:indepte2}
\end{figure}

\begin{exa} Both processes, $C(1/2,p)$ and $C_2(1/2,p)$, die out if and only if $p\leq4/5$. 	
In this case, considering (\ref{eq:indepte2}), we obtain $p_l\approx 0.38 $ and $p_u\approx 0.75$ such that: 
\begin{itemize}
	\item If $p\in(p_l,p_u)$, then  $\mathbb{E}[\tau_2]<\mathbb{E}[\tau_A]<\infty$.
	\item If $p=p_l$ or $p=p_u$, then $\mathbb{E}[\tau_A]=\mathbb{E}[\tau_2]<\infty$.
	\item If $p\in(0,p_l)\cup(p_u,4/5)$, then $\mathbb{E}[\tau_A]<\mathbb{E}[\tau_2]<\infty$. 
	\item If $p\geq4/5$, then $\mathbb{E}[\tau_A]<\mathbb{E}[\tau_2]=\infty$. 
\end{itemize}  
\end{exa}

The following result establishes a comparison between the mean extinction times for the processes $\mc$ and $\MCIt$, when extinction occurs almost surely in both processes.

\begin{prop}\label{indepte3} Assume $p<\frac{3}{2\lambda+3}$. Then $\mathbb{E}[\tau_A]<\mathbb{E}[\tau_3]$ in and only if  
\begin{equation}\label{eq:indepte3}	
	\prod_{k=0}^{\infty}\left(1+\lambda p^k\right)<1+ \frac{\lambda(2\lambda p+3)}{2 g(\lambda,p)}\ln\left[\frac{3-3p-\lambda p + g(\lambda,p)}{3-3p-\lambda p - g(\lambda,p)}\right],
\end{equation}
where $g(\lambda,p)$ is given by (\ref{functiong}).
 Moreover,   $\mathbb{E}[\tau_A]=\mathbb{E}[\tau_3]$ if and only if we have an equality in (\ref{eq:indepte3}).
\end{prop}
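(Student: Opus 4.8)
The plan is to reduce the stated equivalence to a purely algebraic comparison of the two closed-form expressions furnished by Theorems~\ref{th:semdisptime} and~\ref{MCItime}$(ii)$, exactly as Proposition~\ref{indepte2} is obtained from Theorems~\ref{th:semdisptime} and~\ref{MCItime}$(i)$.

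First I would observe that the hypothesis $p < \frac{3}{2\lambda+3}$ places us in the common regime of almost-sure extinction with finite mean. The process $\mc$ always dies out almost surely (Artalejo~\textit{et al.}~\cite{AEL2007}), so $\mathbb{E}[\tau_A]$ is given by Theorem~\ref{th:semdisptime}; and by Junior~\textit{et al.}~\cite[Theorem 2.8]{JMR2016} one has $\psi_3 = 1$ precisely when $p \leq \frac{3}{2\lambda+3}$, while the strict inequality is exactly the condition under which Theorem~\ref{MCItime}$(ii)$ delivers a finite value of $\mathbb{E}[\tau_3]$. Along the way I would record that, under this hypothesis, $g(\lambda,p)$ is real and $3 - 3p - \lambda p - g(\lambda,p) > 0$, so the right-hand side of the formula for $\mathbb{E}[\tau_3]$ is a well-defined positive number; this is already implicit in the statement of Theorem~\ref{MCItime}$(ii)$.

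Next, substituting the two formulas, the inequality $\mathbb{E}[\tau_A] < \mathbb{E}[\tau_3]$ reads
\[
\frac{1}{\lambda}\left(\prod_{k=0}^{\infty}(1+\lambda p^k) - 1\right) < \frac{2\lambda p + 3}{2 g(\lambda,p)}\ln\left[\frac{3-3p-\lambda p + g(\lambda,p)}{3-3p-\lambda p - g(\lambda,p)}\right].
\]
Since $\lambda > 0$, multiplying both sides by $\lambda$ and then adding $1$ are reversible equivalences, and they transform this inequality into exactly (\ref{eq:indepte3}). Because the chain of manipulations is a genuine equivalence, running it with $<$ replaced by $=$ shows $\mathbb{E}[\tau_A] = \mathbb{E}[\tau_3]$ if and only if equality holds in (\ref{eq:indepte3}), and running it with $<$ replaced by $>$ disposes of the remaining case; the three mutually exclusive alternatives then match up.

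I do not expect any genuine analytic obstacle: once the two closed forms are in hand, the entire argument is a substitution followed by an elementary rearrangement. The only point requiring a little care --- the nearest thing to a main obstacle --- is confirming that the hypothesis $p < \frac{3}{2\lambda+3}$ really does land us in the shared almost-sure-extinction-with-finite-mean regime, so that Theorem~\ref{MCItime}$(ii)$ applies and $g(\lambda,p)$, together with the argument of the logarithm, is legitimately real and positive; with that verified, the equivalence is immediate.
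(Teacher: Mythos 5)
Your proposal is correct and coincides with the paper's argument: the paper likewise derives Proposition~\ref{indepte3} directly from Theorems~\ref{th:semdisptime} and \ref{MCItime}$(ii)$ by substituting the two closed-form expectations and rearranging (multiply by $\lambda$, add $1$), which is a reversible chain of equivalences. Your additional care about $g(\lambda,p)$ being real and the logarithm's argument being positive is already subsumed in the statement of Theorem~\ref{MCItime}$(ii)$, exactly as you note.
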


Proposition~\ref{indepte3} is a consequence of Theorems~\ref{th:semdisptime} and \ref{MCItime}$(ii)$. From Proposition~\ref{indepte3} we can conclude that  dispersion is a better strategy compared to non-dispersion, when the parameters $(\lambda, p)$ fall in the gray region of Figure~\ref{fig:indepte3}. The opposite (non-dispersion is a better strategy than independent dispersion) holds in the yellow region. Furthermore, Junior~\textit{et al}~\cite[Theorem 2.8]{JMR2016} show that the extinction probabilities in the white region of Figure~\ref{fig:indepte3} satisfies $\psi_3<1=\psi_A.$ Thus, in the white region,  dispersion  is a better strategy than non-dispersion.

\begin{figure}[ht]
	\begin{tabular}{ccc}
		$\lambda$ & \parbox[c]{9cm}{\includegraphics[trim={0cm 0cm 0cm 0cm}, clip, width=9cm]{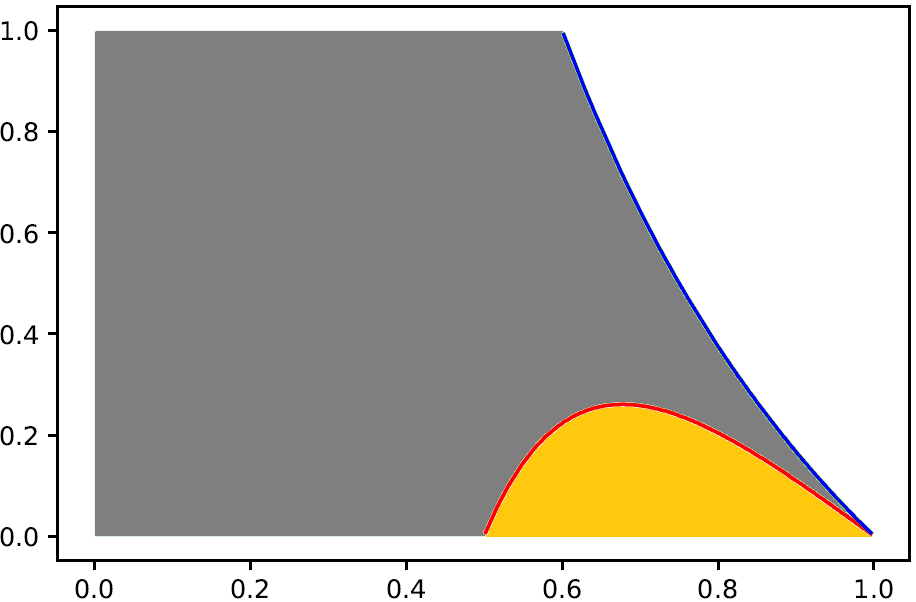}} & 
		\begin{tabular}{l}
			\textbf{\textcolor{red}{------}} Equality in (\ref{eq:indepte3})\\ \\ 
			\textbf{\textcolor{blue}{------}}  $p=\frac{3}{2\lambda +3}$
		\end{tabular} \\
		& $p$
	\end{tabular}
	\caption{In the gray region, $\mathbb{E}[\tau_A]<\mathbb{E}[\tau_3]$. In the yellow region, $\mathbb{E}[\tau_A]>\mathbb{E}[\tau_3]$.}
	\label{fig:indepte3}
\end{figure}

\begin{exa} Both processes, $C(1/5,p)$ and $C_3(1/5,p)$, die out if and only if $p\leq15/17$. 	
	In this case, considering (\ref{eq:indepte3}), we obtain $p_l\approx 0.58$ and $p_u\approx0.80$ such that: 
	\begin{itemize}
		\item If $p\in(p_l,p_u)$, then  $\mathbb{E}[\tau_3]<\mathbb{E}[\tau_A]<\infty$.
		\item If $p=p_l$ or $p=p_u$, then $\mathbb{E}[\tau_A]=\mathbb{E}[\tau_3]<\infty$.
		\item If $p\in(0,p_l)\cup(p_u,15/17)$, then $\mathbb{E}[\tau_A]<\mathbb{E}[\tau_3]<\infty$. 
		\item If $p\geq15/17$, then $\mathbb{E}[\tau_A]<\mathbb{E}[\tau_3]=\infty$. 
	\end{itemize}  
\end{exa}	

The following result establishes that  the mean extinction time for the process without dispersion, $\mc$, is less than for the  process with dispersion and no spatial restriction, $\MCsr$, when extinction occurs almost surely in both processes.

\begin{prop}\label{disp}
	If $p<\frac{1}{\lambda+1}$,  then
	$\mathbb{E}[\tau_A]< \mathbb{E}[\tau_*].$
\end{prop}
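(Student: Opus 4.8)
The plan is to bound $\mathbb{E}[\tau_A]$ from above by an elementary quantity that is in turn at most $\mathbb{E}[\tau_*]$, thereby comparing the two closed forms of Theorems~\ref{th:semdisptime} and \ref{th:comdisptime} through a common intermediary. Since $p<\frac{1}{\lambda+1}$, introduce $x:=\frac{\lambda p}{1-p}\in(0,1)$; the point of this substitution is the identity $\sum_{k\ge1}\lambda p^k=\frac{\lambda p}{1-p}=x$, which is what links the $q$-product to the logarithm appearing in $\mathbb{E}[\tau_*]$. Peeling off the $k=0$ factor $1+\lambda$, Theorem~\ref{th:semdisptime} reads $\mathbb{E}[\tau_A]=\frac{1}{\lambda}\bigl((1+\lambda)\prod_{k\ge1}(1+\lambda p^k)-1\bigr)$, while (using $\lambda=(1+\lambda)-1$) Theorem~\ref{th:comdisptime} reads $\mathbb{E}[\tau_*]=\frac{1}{\lambda}\bigl((1+\lambda)(1-\ln(1-x))-1\bigr)$. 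Multiplying by $\lambda>0$, adding $1$, and dividing by $1+\lambda>0$, the asserted inequality $\mathbb{E}[\tau_A]<\mathbb{E}[\tau_*]$ becomes equivalent to
\[
\prod_{k\ge1}\bigl(1+\lambda p^k\bigr)\;<\;1-\ln(1-x).
\]

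For the left-hand side I would use $1+t\le e^{t}$, applied strictly to the $k=1$ factor (where $\lambda p>0$) so as to keep strictness, which gives $\prod_{k\ge1}(1+\lambda p^k)<\exp\bigl(\sum_{k\ge1}\lambda p^k\bigr)=e^{x}$. It then remains to prove the scalar inequality $e^{x}\le 1-\ln(1-x)$ for $x\in(0,1)$, and this follows from a termwise comparison of the power series $e^{x}=1+\sum_{n\ge1}\frac{x^{n}}{n!}$ and $1-\ln(1-x)=1+\sum_{n\ge1}\frac{x^{n}}{n}$, using $\frac{1}{n!}\le\frac{1}{n}$ for all $n\ge1$ and $x>0$. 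Concatenating the two bounds yields $\mathbb{E}[\tau_A]<\frac{1}{\lambda}\bigl((1+\lambda)e^{x}-1\bigr)\le\frac{1}{\lambda}\bigl((1+\lambda)(1-\ln(1-x))-1\bigr)=\mathbb{E}[\tau_*]$, which is the claim.

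The one genuinely substantive choice---and hence the ``main obstacle'', modest though it is here---is recognizing that the correct intermediary is a single exponential: the substitution $x=\frac{\lambda p}{1-p}$ is essentially forced by the form of $\mathbb{E}[\tau_*]$, and the estimate $\prod_{k\ge1}(1+\lambda p^k)<e^{x}$ is precisely what converts the infinite $q$-product into an object directly comparable with $1-\ln(1-x)$. A quick expansion near $p=0$ confirms that this estimate, together with the slack in $1-\ln(1-x)-e^{x}\ge 0$, leaves the comparison with room to spare, so no finer bound is needed. Notably, this argument requires neither the series representations nor the numerical evaluation of the infinite product developed in Section~\ref{appendix}, in contrast with Propositions~\ref{indepte2} and \ref{indepte3}.
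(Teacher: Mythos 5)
Your proof is correct, and its skeleton coincides with the paper's: both reduce the claim to the inequality $\prod_{k\ge1}(1+\lambda p^k)<1-\ln\bigl(1-\tfrac{\lambda p}{1-p}\bigr)$, both pass through the intermediate bound $\prod_{k\ge1}(1+\lambda p^k)\le e^{x}$ with $x=\tfrac{\lambda p}{1-p}$, and both finish by comparing $e^{x}$ with $1-\ln(1-x)$ termwise via $\tfrac{1}{n!}\le\tfrac{1}{n}$. Where you genuinely diverge is in how the intermediate bound is obtained: the paper expands each $\ln(1+\lambda p^k)$ as an alternating series, interchanges the order of summation by Fubini (after checking absolute convergence with the root test, which uses $\lambda p<1-p$), and then shows that consecutive pairs $a_{2n}+a_{2n+1}$ of the resulting series are negative, so that $\ln\prod_{k\ge1}(1+\lambda p^k)<a_1=x$. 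You instead apply $1+t\le e^{t}$ factor by factor and sum the geometric series, which is shorter, needs no interchange of summation, keeps strictness transparently through the $k=1$ factor, and in fact establishes $\prod_{k\ge1}(1+\lambda p^k)<e^{x}$ without using the hypothesis $p<\tfrac{1}{\lambda+1}$ at that step (the hypothesis enters only to make $\ln(1-x)$ meaningful and the final series comparison valid). The two routes prove the same estimate --- the paper's pairing argument is, in effect, a more laborious verification of $\sum_k\ln(1+\lambda p^k)<\sum_k\lambda p^k$ --- so your version is a clean simplification with no loss of content.
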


Proposition~\ref{disp} leads us to the conclusion that, in the absence of spatial constraints and under binomial catastrophes, dispersion is a more effective strategy than non-dispersion in extending the population's lifespan.

%\begin{figure}[ht]
%	\begin{tabular}{ccc}
%		$\lambda$ & \parbox[c]{9cm}{\includegraphics[trim={0cm 0cm 0cm 0cm}, clip, width=9cm]{SemRestFrankNew.PNG}} & 
%		\begin{tabular}{l}
%			\textbf{\textcolor{red}{------}} Equality in (\ref{eq:disp})\\ \\ 
%			\textbf{\textcolor{blue}{------}}  $p=\frac{1}{\lambda +1}$
%		\end{tabular} \\
%		& $p$
%	\end{tabular}
%	\caption{In the gray region, $\mathbb{E}[\tau_A]<\mathbb{E}[\tau_*]$. In the yellow region, $\mathbb{E}[\tau_A]>\mathbb{E}[\tau_*]$.}
%	\label{fig:SemRest}
%\end{figure}

\section{Proofs}

\begin{lem}[Lemma 4.1 in Junior~\textit{et al.}~\cite{JMR2020}]\label{lemaux}
	Let $(Y_t)_{t\geq 0}$ a continuous time branching process, where each particle survives an exponential time of rate 1 and right before death produces a random number of particles  with probability generating function
	$$f(s)=\sum_{k=0}^{\infty}p_ks^k.$$
	Suppose that $Y_0=1$ and $f'(1)\leq 1$.  Let $\tau=\inf\{t>0:Y_t=0\}$,  the extinction time of the process $(Y_t)_{t\geq 0}$. 
	\begin{enumerate}
		\item[$(i)$] If $p_2\neq 0$ and $p_k= 0$ for $k\geq 3$, then
		$$\mathbb{E}[\tau]=
		\left\{\begin{array}{cl}
		\displaystyle\frac{1}{p_2}\ln\left(\frac{p_0}{p_0-p_2}\right) & \text{, if } f'(1)<1,\\ \\
		\infty & \text{, if } f'(1)=1.
		\end{array}
		\right.$$
		\item[$(ii)$] If $p_3\neq 0$ and $p_k= 0$ for $k\geq 4$, then
		$$\mathbb{E}[\tau]=
		\left\{\begin{array}{cl}
		\displaystyle\frac{1}{\sqrt{4p_0p_3+(p_2+p_3)^2}}\ln\left[\frac{2p_0-p_2-p_3+\sqrt{4p_0p_3+(p_2+p_3)^2}}{2p_0-p_2-p_3-\sqrt{4p_0p_3+(p_2+p_3)^2}}\right] & \text{, if } f'(1)<1,\\ \\
		\infty & \text{, if } f'(1)=1.
		\end{array}
		\right.$$
		\item[$(iii)$] If $p_0=\beta$ and $p_n=\alpha c^n$ for $n\geq 1$, where $\alpha, \beta$ and $c$ are positive constants, then
			$$\mathbb{E}[\tau]=
			\left\{\begin{array}{cl}
			1-\displaystyle\frac{1-\beta}{c}\ln\left[1-\frac{c}{\beta}\right] & \text{, if } f'(1)<1,\\ \\
			\infty & \text{, if } f'(1)=1.
			\end{array}
			\right.$$\\
		\end{enumerate}
\end{lem}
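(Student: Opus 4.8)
The plan is to reduce all three cases to one integral formula for $\mathbb{E}[\tau]$ and then to evaluate that integral by elementary calculus. The starting point is the generating-function description of the process. Let $F(s,t)=\mathbb{E}\bigl[s^{Y_t}\mid Y_0=1\bigr]$ and $u(t)=F(0,t)=\mathbb{P}(Y_t=0)$; since state $0$ is absorbing, $\{Y_t=0\}$ is increasing in $t$, so $u(t)=\mathbb{P}(\tau\le t)$. Conditioning on the first branching event (equivalently, the backward Kolmogorov equation for a continuous-time Markov branching process in which each particle branches at rate $1$ with offspring pgf $f$) gives $\partial_t F=f(F)-F$, $F(s,0)=s$, and in particular the scalar initial value problem
\[
u'(t)=f(u(t))-u(t),\qquad u(0)=0 .
\]
Writing $g:=f-\mathrm{id}$, nonnegativity of the coefficients of $f$ makes $g$ convex with $g(0)=p_0$, $g(1)=0$ and $g'(1)=f'(1)-1\le 0$; since $g$ lies above its tangent at $v=1$ we get $g\ge 0$ on $[0,1]$, and since $f\not\equiv\mathrm{id}$ in each of the three cases a short analyticity argument upgrades this to $g>0$ on $[0,1)$ (in particular $p_0>0$). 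Hence $u$ is strictly increasing with $u(t)\uparrow 1$, which is the almost-sure extinction encoded by $f'(1)\le 1$.

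Next I would combine the tail formula $\mathbb{E}[\tau]=\int_0^\infty\mathbb{P}(\tau>t)\,dt=\int_0^\infty(1-u(t))\,dt$ with the substitution $v=u(t)$, $dv=g(v)\,dt$ (which makes $u\colon[0,\infty)\to[0,1)$ a $C^1$ bijection) to obtain the master identity
\[
\mathbb{E}[\tau]=\int_0^1\frac{1-v}{f(v)-v}\,dv ,
\]
read as an element of $[0,\infty]$, finite exactly when $\mathbb{E}[\tau]<\infty$.

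It then remains to compute this integral in each regime, and the key simplification is that $f(1)=1$ forces a cancellation of the factor $1-v$. In case $(i)$, $f(v)-v=(1-v)(p_0-p_2v)$, so the integrand is $1/(p_0-p_2v)$ and the integral is an elementary logarithm. In case $(ii)$, dividing the cubic by $1-v$ gives $f(v)-v=(1-v)\bigl(p_0-(p_2+p_3)v-p_3v^2\bigr)$; the remaining quadratic has discriminant exactly $(p_2+p_3)^2+4p_0p_3$, and a partial-fraction expansion over its two real roots $r_\pm$ produces a logarithm of the cross-ratio $\tfrac{r_+(1-r_-)}{(r_+-1)(-r_-)}$. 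In case $(iii)$, summing the geometric tail gives $f(v)=\beta+\alpha cv/(1-cv)$; clearing $1-cv$ and using $f(1)=1$ (i.e.\ $\alpha c=(1-\beta)(1-c)$) yields $f(v)-v=(1-v)(\beta-cv)/(1-cv)$, so the integrand collapses to $(1-cv)/(\beta-cv)=1+(1-\beta)/(\beta-cv)$, which integrates at once to the stated expression. In every case the hypothesis $f'(1)<1$ is precisely what keeps the relevant linear or quadratic factor from vanishing on $[0,1]$ (it reads $p_0>p_2$, resp.\ $p_0>p_2+2p_3$, resp.\ $\beta>c$), so the integral converges; and when $f'(1)=1$ the point $v=1$ is a double zero of $f(v)-v$, the integrand is asymptotic to a constant times $(1-v)^{-1}$ there, so the integral — hence $\mathbb{E}[\tau]$ — diverges.

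I expect the only genuinely fiddly step to be the algebra in case $(ii)$: re-expressing the logarithm of $\tfrac{r_+(1-r_-)}{(r_+-1)(-r_-)}$ through the symmetric functions $r_++r_-=-(p_2+p_3)/p_3$ and $r_+r_-=-p_0/p_3$ so that it matches $\dfrac{2p_0-p_2-p_3+\sqrt{D}}{2p_0-p_2-p_3-\sqrt{D}}$ with $D=(p_2+p_3)^2+4p_0p_3$, and verifying that the prefactor $\bigl(p_3(r_+-r_-)\bigr)^{-1}$ equals $D^{-1/2}$. Everything else is routine once the master identity is in place.
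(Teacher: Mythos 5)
Your proposal is correct. Note that the paper itself gives no proof of this lemma --- it is imported verbatim as Lemma 4.1 of Junior \emph{et al.}~\cite{JMR2020} --- so there is no internal argument to compare against; your derivation via the backward equation $u'=f(u)-u$ and the master identity $\mathbb{E}[\tau]=\int_0^1\frac{1-v}{f(v)-v}\,dv$ is the standard route and is the one used in that reference. I checked the three evaluations: the factorizations $f(v)-v=(1-v)(p_0-p_2v)$, $(1-v)\bigl(p_0-(p_2+p_3)v-p_3v^2\bigr)$ and $(1-v)(\beta-cv)/(1-cv)$ are right, the cross-ratio in case $(ii)$ does reduce to $\frac{2p_0-p_2-p_3+\sqrt{D}}{2p_0-p_2-p_3-\sqrt{D}}$ with prefactor $D^{-1/2}$, the conditions $p_0>p_2$, $p_0>p_2+2p_3$, $\beta>c$ are exactly $f'(1)<1$, and the double zero at $v=1$ in the critical case gives the divergence.
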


In order to prove Theorems~\ref{MCItime} and \ref{th:comdisptime}, observe that  the probability distribution of the number of survivals right after the catastrophe (but before the dispersion) is given by
\[\mathbb{P}(N=0) =\beta, \,\mathbb{P}(N = n) = \alpha c^n, n =1,2,\ldots,\] where
\begin{equation}\label{beta_lambda_c}
\beta = \frac{1-p}{\lambda p + 1}, \ \alpha = \frac{\lambda + 1}{\lambda ( \lambda p +1)} \hbox{ and } \ c = \frac{\lambda p}{\lambda p + 1}.
\end{equation} 
For details see Machado~\textit{et al}~\cite[Equation (4.1)]{JMR2016}.

\begin{proof}[Proof of Theorem~\ref{MCItime}] 
Let $Z_t$ be the number of colonies at time $t$ in the model $\MCI$. Observe that $Z_t$ is a continuous-time branching process with  $Z_0=1$. Each particle (colony) in $Z_t$ survives an exponential time of rate 1 and right before death produces $k\leq d$ particles (colonies are created right after a catastrophe) with probability $p_k$ given by

	$$p_k=\left\{
\begin{array}{cl}
\beta & \text{, if }  k=0;\\ 
\alpha\dbinom{d}{k}\displaystyle\sum_{n=k}^{\infty} T(n,k)\left(\frac
{c}{d}\right)^n   & \text{, if } 1\leq k<d; \\ 
1- \displaystyle\sum_{j=0}^{d-1}p_j & \text{, if } k= d; 
\end{array}\right.
$$
where $T(n, k)$ denote the number of surjective functions $f : A \to B,$ with $|A| = n$
and $|B| = k.$\\
		
Moreover, $\tau_d^i=\inf\{t>0:Z_t=0\}$.	 \\	
	
$\bullet$ For $d=2$, we have that  
	\[
p_0 = \beta, \ p_1 = \frac{2\alpha c}{2-c} \hbox{ and } p_2 = 1- \beta - \frac{2\alpha c}{2-c}.
\]
Furthermore, the condition $p< \frac{2}{2+\lambda}$  is equivalent to $p_1+2p_2<1$.  Thus, from Lemma~\ref{lemaux}$(i)$, we have that  	
	$$\begin{array}{lll}
	\mathbb{E}[\tau_2]&=&\displaystyle\frac{1}{p_2}\ln\left(\frac{p_0}{p_0-p_2}\right)\\ \\
	&=&\displaystyle\frac{(\lambda p+1)(\lambda p +2)}{\lambda p^2(\lambda +1)}\ln\left[\frac{(1-p)(\lambda p+2)}{(1-p)(\lambda p +2)-\lambda p^2(\lambda +1)}\right],
	\end{array}$$
where the last line has been obtained using (\ref{beta_lambda_c}).\\

When $p= \frac{2}{2+\lambda}$, we have that  $p_1+2p_2=1$.  Thus, from Lemma~\ref{lemaux}$(i)$, it follows that  $	\mathbb{E}[\tau_2]=\infty$.\\

$\bullet$ For $d=3$, we have that 
	\[
p_0 = \beta,  p_1 = \frac{3\alpha c}{3-c}, p_2= \frac{6\alpha c^2}{(3-2c)(3-c)}  \hbox{ and } p_3 = 1- \beta- \frac{3\alpha c}{3-c}-\frac{6\alpha c^2}{(3-2c)(3-c)}.
\]
Furthermore, the condition $p< \frac{3}{2\lambda+3}$  is equivalent to $p_1+2p_2+3p_3<1$. Thus, from Lemma~\ref{lemaux}$(ii)$, we have that  		
	  	
$$\begin{array}{lll}
\mathbb{E}[\tau_3]&=&\displaystyle\frac{1}{\sqrt{4p_0p_3+(p_2+p_3)^2}}\ln\left[\frac{2p_0-p_2-p_3+\sqrt{4p_0p_3+(p_2+p_3)^2}}{2p_0-p_2-p_3-\sqrt{4p_0p_3+(p_2+p_3)^2}}\right]\\\\
&=&
\displaystyle\frac{2\lambda p+3}{2 g(\lambda,p)}\ln\left[\frac{3-3p-\lambda p + g(\lambda,p)}{3-3p-\lambda p - g(\lambda,p)}\right],
\end{array}$$
where the last line has been obtained using (\ref{beta_lambda_c}) and $g(\lambda,p)$ is given by (\ref{functiong}).\\

When $p= \frac{3}{2\lambda+3}$, we have that $p_1+2p_2+3p_3=1$.  Thus, from Lemma~\ref{lemaux}$(ii)$, it follows that  $	\mathbb{E}[\tau_3]=\infty$.
\end{proof}

\begin{proof}[Proof of Theorem~\ref{th:comdisptime}] Analogously to the proof of Theorem~\ref{MCItime}. In this case, 
$p_0 =\beta,$ and $p_n = \alpha c^n, n =1,2,\ldots.$ 
\end{proof}

\begin{proof}[Proof of Proposition~\ref{disp}]
Assume that $p<\frac{1}{\lambda+1}$ (or equivalently $\lambda p<1-p)$. From Theorems~\ref{th:semdisptime} and \ref{th:comdisptime} we have that  
$\mathbb{E}[\tau_A]<\mathbb{E}[\tau_*]$ if and only if   
\begin{equation}\label{eq:disp}
	\prod_{k=1}^{\infty}\left(1+\lambda p^k\right) < 1-\displaystyle\ln\left(1-\frac{\lambda p}{1-p}\right).
\end{equation}	
To show that inequality (\ref{eq:disp}) holds, note that the series 
$$\sum_{n=1}^{\infty}\sum_{k=1}^{\infty}\frac{(-1)^{n+1}}{n}\lambda^n p^{kn}=\sum_{n=1}^{\infty}\frac{(-1)^{n+1}}{n} \frac{\lambda^np^n}{1-p^n},$$
 converges  absolutely if $\lambda p<1-p$ (use the root test). Thus, using the Taylor expansion (see \cite[Chapter 9]{Bartle2}) of the function $\ln(1+x)$  and  Fubini's Theorem (see \cite[Chapter 10]{Bartle1}), we have 
\begin{eqnarray*}
	\ln\left[\prod_{k=1}^{\infty}\left(1+\lambda p^k\right)\right]&=&\sum_{k=1}^{\infty}\ln(1+\lambda p^k) \nonumber\\
	&=&\sum_{k=1}^{\infty}\sum_{n=1}^{\infty}\frac{(-1)^{n+1}}{n}\lambda^n p^{kn} \nonumber\\
	&=&\sum_{n=1}^{\infty}\sum_{k=1}^{\infty}\frac{(-1)^{n+1}}{n}\lambda^n p^{kn} \nonumber \\
	&=&\sum_{n=1}^{\infty}\frac{(-1)^{n+1}}{n} \frac{\lambda^np^n}{1-p^n}.
\end{eqnarray*}	
Let $a_n=\frac{(-1)^{n+1}}{n}\frac{\lambda^np^n}{1-p^n}.$ Observe that for $\lambda p<1-p,$
\begin{eqnarray*}
a_{2n}+a_{2n+1}&=& -\frac{\lambda^{2n}p^{2n}}{2n(1-p^{2n})} + \frac{\lambda^{2n+1}p^{2n+1}}{(2n+1)(1-p^{2n+1})} \nonumber\\
&=& -\frac{\lambda^{2n}p^{2n}}{2n(2n+1)}\left[\frac{2n+1}{1-p^{2n}}-\frac{2n\lambda p}{1-p^{2n+1}}\right]  \nonumber\\
&<& -\frac{\lambda^{2n}p^{2n}}{2n(2n+1)}\left[\frac{2n+1}{1-p^{2n}}-\frac{2n(1-p)}{1-p^{2n+1}}\right]  \nonumber\\
&=& -\frac{\lambda^{2n}p^{2n}}{2n(2n+1)}\left[\frac{2n[p^{2n}(1-p)+p(1-p^{2n})]}{(1-p^{2n})(1-p^{2n+1})}+\frac{1}{1-p^{2n}}\right]  \nonumber\\
&<&0.
\end{eqnarray*}	
Thus, $$\ln\left[\prod_{k=1}^{\infty}\left(1+\lambda p^k\right)\right]=a_1+\sum_{n=1}^{\infty}(a_{2n}+a_{2n+1})<a_1=\frac{\lambda p}{1-p}.$$
Therefore, using the Taylor expansions of the functions $e^x$ and $\ln(1-x)$, we have that
\begin{eqnarray*}
\prod_{k=1}^{\infty}\left(1+\lambda p^k\right)&\leq& \exp\left(\frac{\lambda p}{1-p}\right)\\
&=&1+\sum_{n=1}^{\infty}\frac{1}{n!}\left(\frac{\lambda p}{1-p}\right)^n \\
&<&1+\sum_{n=1}^{\infty}\frac{1}{n}\left(\frac{\lambda p}{1-p}\right)^n\\
&=&1-\ln\left(1-\frac{\lambda p}{1-p}\right).
\end{eqnarray*}	
\end{proof}

\section{Numerical Analysis}\label{appendix}

This section presents the development of a numerical method to identify the regions within the parametric space of $p\times\lambda$ where Inequalities (\ref{eq:indepte2}) and (\ref{eq:indepte3}) hold, as well as the regions where they do not.

Let $$f(p, \lambda)=\prod_{k=0}^{\infty} (1+ \lambda p^k).$$
Let $g_1$, $g_2$, $h_1$ and $h_2$,  functions of $p$ and $\lambda$ such that  Inequalities (\ref{eq:indepte2}) and (\ref{eq:indepte3}) correspond to $f< g_1$ and $f< g_2$,  restricted to $h_1>0$ and $h_2>0$, respectively. 
Note that 
$$h_1(p,\lambda)= 2-p(\lambda+2)$$
$$h_2(p,\lambda)= 3-p(2\lambda+3)$$
In order to calculate and compare the function $f$, we use the lower and upper bounds given by the following lemma.

\begin{lem}\label{lema}
	If $p< \frac{a}{b \lambda +a}$.
	Then, for all $M\in\mathbb{N},$
	$$ \prod_{k=0}^{M} (1+ \lambda p^k) \leq f(p, \lambda) \leq
	\exp\left(\frac{a}{b}p^{M}\right) \prod_{k=0}^{M} (1+ \lambda p^k).  $$
\end{lem}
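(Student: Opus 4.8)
The plan is to split the infinite product at the index $M$ and estimate the two resulting pieces separately. Since the convergence of $f(p,\lambda)=\prod_{k=0}^{\infty}(1+\lambda p^k)$ was already recorded in the remark after Theorem~\ref{th:semdisptime}, writing $f(p,\lambda)=\prod_{k=0}^{M}(1+\lambda p^k)\cdot\prod_{k=M+1}^{\infty}(1+\lambda p^k)$ is legitimate.

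The lower bound is immediate. Because $p\in(0,1)$ and $\lambda>0$, each factor satisfies $1+\lambda p^k\ge 1$, so appending the remaining factors $\prod_{k=M+1}^{\infty}(1+\lambda p^k)\ge 1$ can only increase the product; hence $\prod_{k=0}^{M}(1+\lambda p^k)\le f(p,\lambda)$.

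For the upper bound it suffices to show that the tail product is at most $\exp\!\left(\tfrac{a}{b}p^{M}\right)$. Applying the elementary inequality $1+x\le e^{x}$ to each remaining factor gives
\[
\prod_{k=M+1}^{\infty}(1+\lambda p^{k})\le \exp\!\left(\sum_{k=M+1}^{\infty}\lambda p^{k}\right)=\exp\!\left(\frac{\lambda p^{M+1}}{1-p}\right),
\]
where the last equality is just the sum of a geometric series (valid since $|p|<1$). It remains to compare the exponent with $\tfrac{a}{b}p^{M}$: the inequality $\dfrac{\lambda p^{M+1}}{1-p}\le \dfrac{a}{b}\,p^{M}$ is equivalent, after cancelling $p^{M}>0$ and clearing denominators, to $\lambda b\,p\le a(1-p)$, i.e. $p(\lambda b+a)\le a$, i.e. $p\le \dfrac{a}{b\lambda+a}$. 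This is precisely the standing hypothesis (indeed with strict inequality). Multiplying the tail bound by $\prod_{k=0}^{M}(1+\lambda p^{k})$ yields the claimed upper bound, finishing the proof.

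There is no substantive obstacle here; the only point requiring care is making sure that the exponent produced by the geometric-series estimate is dominated by exactly the threshold $a/(b\lambda+a)$ appearing in the hypothesis, which is the short algebraic verification carried out in the last step.
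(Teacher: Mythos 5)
Your proof is correct and follows essentially the same route as the paper: the lower bound from $1+\lambda p^k\ge 1$, and the upper bound by applying $1+x\le e^x$ to the tail factors, summing the geometric series, and checking that the hypothesis $p<\frac{a}{b\lambda+a}$ makes the exponent at most $\frac{a}{b}p^{M}$. No issues.
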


\begin{proof} The first inequality holds since $1+\lambda p^k\geq 1$ for all $k\geq 1$. To prove the second inequality we observe that as
    $p< \frac{a}{a+b \lambda}$, then $$\lambda <\frac{a}{b}\frac{(1-p)}{p}.$$
	
	Thus,  using $(1+ x)\leq e^x $ for all $x\in\mathbb{R},$ 

	we have that 
	$$\begin{array}{lll}
		\displaystyle\prod_{k=0}^{\infty} (1+ \lambda p^k)&\leq& \left[\displaystyle\prod_{k=0}^{M} (1+ \lambda p^k)\right] \displaystyle\exp\left(\sum_{k=M+1}^{\infty} \lambda p^{k} \right) \\ \\
		&=&\left[\displaystyle\prod_{k=0}^{M} (1+ \lambda p^k)\right] \displaystyle\exp\left(\frac{\lambda p^{M+1}}{1-p} \right) \\ \\
		&<&\left[\displaystyle\prod_{k=0}^{M} (1+ \lambda p^k)\right] \displaystyle\exp\left(\frac{a}{b}p^{M}\right).
	\end{array}   $$
\end{proof}

Now we consider the following task. Given particular values of $p$ and $\lambda$, determine if $f$ is lower than or greater than $g$. We do this by recursion on $M$: if the upper bound of $f$ is below $g$ then $f$ is lower than $g$,  if the lower bound of $f$ is above $g$ then $f$ is greater than $g$, in other case we try again with a bigger value of $M$. Notice that the upper and lower bounds of $f$ tend to $f$ when $M$ tends to infinity.\\

\section{Acknowledgments} 
The authors are grateful to the anonymous referees for their valuable comments and suggestions that contributed to enhancing the quality of the paper.

\end{document}